\newcommand{\R}{{\mathbb R}}
\newcommand{\Per}{\mathrm{Per}}
\newcommand{\diam} {\mathrm{diam}}
\newtheorem{theorem}{Theorem}[section]
\newtheorem{lemma}[theorem]{Lemma}
\newtheorem{proposition}[theorem]{Proposition}
\theoremstyle{definition}
\title[]{Isoperimetric problems for a nonlocal perimeter of Minkowski type}
\author[A. Cesaroni, M. Novaga]{}
\subjclass{  }
 \keywords{ Nonlocal perimeter, Minkowski content, quantitative isoperimetric inequality, 
 Brunn-Minkowski inequality, nonlocal variational problem}
 \email{annalisa.cesaroni@unipd.it}
 \email{matteo.novaga@unipi.it}
\thanks{The authors were supported by the Italian GNAMPA and by the University
of Pisa via grant PRA-2017-23.}
\begin{document}
\maketitle 

\centerline{\scshape Annalisa Cesaroni}
\medskip
{\footnotesize
 \centerline{Department of Statistical Sciences}
   \centerline{University of Padova}
   \centerline{Via Cesare Battisti 141, 35121 Padova, Italy  }
} 

\medskip

\centerline{\scshape Matteo Novaga}
\medskip
{\footnotesize
 \centerline{ Department of Mathematics}
   \centerline{University of Pisa}
   \centerline{Largo Bruno Pontecorvo 5, 56127 Pisa, Italy  }
}

\bigskip


\begin{abstract}
We show a quantitative version of the isoperimetric inequality for a non local perimeter of Minkowski type.
We also apply this result to study isoperimetric problems with repulsive interaction terms, under volume and convexity constraints. 
We prove existence of minimizers,  and we describe their shape as the volume tends to zero or to infinity.
\end{abstract}

\tableofcontents


\section{Introduction}
In a recent series of papers, \cite{bklmp, cmp1, cmp2} a class of variational problems which interpolate between the classical perimeter and the volume functionals have been introduced and  analyzed in details, also in anisotropic contexts and in view of discretizations methods.  Such nonlocal functionals  are used in image processing to keep fine details and irregularities of the image while denoising additive white noise. 

These objects, which we call nonlocal perimeters of Minkowski type, are modeled by an energy which resembles the usual perimeter at large scales, but presents a predominant volume contribution at small scales, giving rise to a nonlocal behavior, which may produce severe loss of regularity and compactness.   
A preliminary  study of the main properties of such  perimeters and the related Dirichlet energies has been developed recently in \cite{cdnv}. In that paper, 
the main features of sets with finite  perimeter, in particular compactness properties,   local and global  isoperimetric inequalities are discussed. Moreover,  some   properties of minimizers of such functionals are proved, such as  density properties and existence of plane-like minimizers under periodic perturbations. 

In this paper our aim is to push a bit further such analysis, obtaining a quantitative version of the isoperimetric inequality for these non local perimeters of Minkowski type. 
The quantitative version of the isoperimetric inequality is based on recent results 
on  a quantitative Brunn-Minkowski inequality, obtained   in \cite{bj, fmm}, 
and for more general sets in \cite{fj}. 

We  also study isoperimetric problems in presence of nonlocal repulsive interaction terms under convexity constraints. In particular, we provide existence of minimizers for every volume, we show that balls are minimizers for small volumes, and we provide a description of the asymptotic shape of minimizers in the large volume regime.  
Local and global minimality properties of balls with respect to the volume-constrained minimization of a free energy consisting of a  the classical perimeter plus a non-local repulsive interaction term has been analyzed recently in \cite{km1, km2, bc}. In these papers existence and non-existence properties of the minimizers of the considered variational problem were established, together with a more detailed information about the shape of the minimizers in certain parameter regimes. In particular, it is proved that balls are the unique minimizers for small volume. An improvement of these results and the extension  to the case of nonlocal perimeter of fractional type has been given in \cite{ffmmm} (see also \cite{cn}). 

\section{Notation and preliminary definitions}
We let $B_r$ be  the open ball of radius $r$ centered at the origin, and  by $B_r(x)$ the open ball of radius $r$ centered at $x$. Finally, we denote with $B$ the ball 
centered at the origin with volume $1$. 

We shall identify a measurable set~$E\subseteq \R^n$ with its points of density one, and we let
$\partial E$ be the boundary of $E$ in the measure theoretic sense \cite{afp}.
We will also denote by $|E|$ the $n$-dimensional Lebesgue measure of $E$ and 
\begin{equation}\label{omega r}
\begin{split} & E\oplus B_r:=
\bigcup_{x\in E} B_r(x)=(\partial E\oplus B_r)\cup E
=(\partial E\oplus B_r)\cup(E\ominus B_r),\\
{\mbox{where}}\qquad  &
E\ominus B_r:=
E\setminus \left(\bigcup_{x\in\partial E} B_r(x)\right)=
E\setminus\big(
\partial E)\oplus B_r\big).\end{split}\end{equation}

\smallskip

Given~$r>0$, for any measurable set~$E\subseteq\R^n$
we consider the functional 
\begin{equation} \label{perr}
\Per_r(E):=\frac{1}{2r}|
(\partial E)\oplus B_r|
=\frac{1}{2r}
(|E\oplus B_r|-|E\ominus B_r|).\end{equation} 
Notice that, since we identify a set with its points of density one, we  have that 
$$\Per_r(E )= \min_{|E'\Delta E|=0} \Per_r(E').$$ 

The definition of~$\Per_r$ is inspired by
the classical Minkowski content. In particular, for 
sets with compact and $(n-1)$-rectifiable boundaries, the functional in~\eqref{perr}
may be seen as a nonlocal approximation of the classical perimeter functional,
in the sense that
\begin{equation*} \lim_{r\searrow0}  \Per_r(E)=
{\mathcal{H}}^{n-1} (\partial E).\end{equation*}
Hence, in 
some sense, $\Per_r$ interpolates between the perimeter functional for small~$r$,
and the volume for large~$r$.

For a set $E\subseteq \R^n$ , we also introduce  the Riesz energy
\begin{equation} \label{riesz} 
\Phi_\alpha(E)=\int_E\int_E \frac{1}{|x-y|^\alpha}dxdy=\int_E V_{E, \alpha}(x)dx,
\end{equation} 
where $\alpha\in (0,n)$ and the potential $V_{E, \alpha}$ is defined as
\begin{equation} \label{potential} 
V_{E,\alpha}(x)=\int_E \frac{1}{|x-y|^\alpha}dy.
\end{equation} 
We recall that, by Riesz inequality \cite{riesz}, balls are the (unique) volume constrained maximizers of $\Phi_\alpha$.

\section{Quantitative isoperimetric inequality} 
First of all we point out that a consequence of  the Brunn-Minkowski inequality (see \cite{bj, fmm,fj}) 
is that the  balls are isoperimetric for the functional in~\eqref{perr}. This has been proved in  \cite[Lemma 2.1]{cdnv}:

\begin{proposition}\label{ISOR}
For  any measurable set~$E\subseteq\R^n$ 
 it holds that
\begin{equation}\label{FORMULA ISP} \Per_r(E)\ge\Per_r(B_R),
\end{equation}
where $R$ is such that~ $|E|=|B_R|$. Moreover,
the equality holds iff  $E=B_{ R}(x)$, 
for   some~$x\in\R^n$.
\end{proposition}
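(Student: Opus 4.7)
The overall plan is to bound the two contributions $|E\oplus B_r|$ and $|E\ominus B_r|$ in $\Per_r(E)$ separately, using the Brunn--Minkowski inequality in each case, and then observe that $B_R\oplus B_r=B_{R+r}$ and $B_R\ominus B_r=B_{R-r}$, so the two bounds together combine exactly into $\Per_r(B_R)$. Let $\omega_n=|B|/1$ in the sense $|B_\rho|=\omega_n\rho^n$; then $|E|^{1/n}=\omega_n^{1/n}R$.

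For the outer part, Brunn--Minkowski gives directly
\[
|E\oplus B_r|^{1/n}\ge |E|^{1/n}+|B_r|^{1/n}=\omega_n^{1/n}(R+r),
\]
so $|E\oplus B_r|\ge |B_{R+r}|$. For the inner part, the key step is to verify the inclusion $(E\ominus B_r)\oplus B_r\subseteq E$ in the measure-theoretic sense: if $y\in E\ominus B_r$, then $y\in E$ and $B_r(y)$ does not meet $\partial E$, so by connectedness of $B_r(y)$ it lies entirely in $E$ (modulo the identification with points of density one). Applying Brunn--Minkowski to $(E\ominus B_r)\oplus B_r$ and using $|(E\ominus B_r)\oplus B_r|\le|E|=\omega_n R^n$ yields
\[
|E\ominus B_r|^{1/n}\le \omega_n^{1/n}(R-r)_+,
\]
hence $|E\ominus B_r|\le|B_{(R-r)_+}|=|B_R\ominus B_r|$. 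Subtracting and dividing by $2r$ produces $\Per_r(E)\ge\Per_r(B_R)$.

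For the rigidity statement, equality in the chain forces equality in the Brunn--Minkowski step applied to $E$ and $B_r$: this is the classical Brunn--Minkowski equality case, which forces $E$ to be homothetic to the convex set $B_r$, i.e.\ a ball of radius $R$ up to a translation. The main obstacle I expect is the inclusion $(E\ominus B_r)\oplus B_r\subseteq E$, which has to be argued carefully in measure-theoretic terms because $\partial E$ here denotes the essential boundary rather than the topological boundary; once that is settled, the rest is a direct application of Brunn--Minkowski and its equality cases.
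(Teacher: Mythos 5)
Your proof is correct and follows essentially the same route as the paper: the paper defers this proposition to \cite{cdnv}, but the identical decomposition --- $|E\oplus B_r|\ge|B_{R+r}|$ by Brunn--Minkowski and $|E\ominus B_r|\le|B_{(R-r)_+}|$ via the inclusion $(E\ominus B_r)\oplus B_r\subseteq E$ --- is exactly what is used again in the proof of Theorem~\ref{quantiso}, and your treatment of the equality case via the classical Brunn--Minkowski rigidity is the standard one. The measure-theoretic subtlety you flag in justifying $(E\ominus B_r)\oplus B_r\subseteq E$ is real but routine (a connected open ball disjoint from the essential boundary lies, up to a null set, entirely inside or outside $E$), and the paper itself uses that inclusion without further comment.
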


In this section we provide a quantitative version of this result. 
From now on, $C(n)>0$ will denote a universal constant depending only on the space dimension $n$. 

\begin{theorem}[Quantitative isoperimetric inequality]\label{quantiso} 
For  any measurable set~$E\subseteq\R^n$ 
it holds that

\begin{equation}\label{iso2} 
\frac{\Per_r(E)-\Per_r(B_R)}{\Per_r(B_R)} \geq  C(n)\min\left(\frac{R}{r},1\right)\left(\inf_{x\in\R^N}\frac{|E\Delta B_R(x)|}{|B_R|}\right)^2.\end{equation}
\end{theorem}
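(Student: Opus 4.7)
The plan is to reduce the theorem to the quantitative Brunn--Minkowski inequality of \cite{fmm, fj} applied to the pair $(E, B_r)$. First I would decompose
\[
\Per_r(E) - \Per_r(B_R) = \frac{1}{2r}\bigl[(|E \oplus B_r| - |B_{R+r}|) + (|B_R \ominus B_r| - |E \ominus B_r|)\bigr],
\]
and observe that both bracketed terms are non-negative: the first by classical Brunn--Minkowski applied to $E$ and $B_r$, the second by the same inequality applied to $E \ominus B_r$ and $B_r$ via the inclusion $(E \ominus B_r) \oplus B_r \subseteq E$. In particular, when $R \leq r$ this argument forces $|E \ominus B_r| = 0 = |B_R \ominus B_r|$, so only the outer Minkowski term survives.

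The crux of the argument is then to quantify the outer inequality. Since $(|E|/|B_r|)^{1/n} = R/r$, the rescaled copy of $B_r$ of the same volume as $E$ is exactly $B_R$, so the relative asymmetry entering the quantitative Brunn--Minkowski estimate coincides with the Fraenkel asymmetry $\alpha := \inf_{x \in \R^n} |E \Delta B_R(x)|/|B_R|$. The form I would invoke, available from \cite{fmm} in the convex regime and extended to general measurable sets in \cite{fj}, reads
\[
|E \oplus B_r| - |B_{R+r}| \geq c(n)\, |B_{R+r}|\, \min(R/r, r/R)\, \alpha^2,
\]
in which the size-ratio factor $\tau^{1/n} = \min(R/r, r/R)$ captures the explicit dependence on the relative scale of the two Minkowski summands.

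The proof then concludes by dropping the non-negative inner contribution and dividing by $\Per_r(B_R)$, splitting into two regimes. For $R \geq r$, the elementary bound $\Per_r(B_R) = (|B_{R+r}| - |B_{R-r}|)/(2r) \leq n\,|B_{R+r}|/(R+r)$ combined with $\min(R/r, r/R) = r/R$ produces, after canceling the common factors, a bound $\gtrsim \alpha^2 = \min(R/r, 1)\alpha^2$. For $R < r$, the identity $\Per_r(B_R) = |B_{R+r}|/(2r)$ together with $\min(R/r, r/R) = R/r$ gives the bound $\gtrsim (R/r)\alpha^2 = \min(R/r, 1)\alpha^2$ directly. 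The main obstacle I anticipate is selecting and quoting the correct form of the quantitative Brunn--Minkowski inequality: the size-ratio factor $\tau^{1/n}$ is precisely what generates the $\min(R/r, 1)$ weight on the right of \eqref{iso2}, and retaining the squared Fraenkel asymmetry for general non-convex $E$ requires careful use of the stability result of \cite{fj}.
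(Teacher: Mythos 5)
Your proposal is correct and follows essentially the same route as the paper: the same decomposition of $\Per_r(E)-\Per_r(B_R)$ into outer and inner Minkowski contributions, the inner term controlled via $(E\ominus B_r)\oplus B_r\subseteq E$ and Brunn--Minkowski, and the outer term quantified by the stability of Brunn--Minkowski for the pair $(E,B_r)$, split into the regimes $R\ge r$ and $R<r$. The only difference is presentational: the paper quotes the inequality of Barchiesi--Julin in the $1/n$-power form $|F\oplus K|^{1/n}-|F|^{1/n}-|K|^{1/n}\ge C(n)\min\bigl(|F|^{1/n},|K|^{1/n}\bigr)\alpha^2$ (valid for measurable $F$ and convex $K$, which already covers your non-convex $E$ since $K=B_r$ is a ball) and then converts to the volume form you invoke directly, so your ``volume form'' should be derived from that statement rather than quoted as is.
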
 
 
To show this result we will use  the following quantitative versions of the 
Brunn-Minkowski inequality,  proved in \cite[Corollary 1]{bj} (see also \cite{fmm, fj}). 
\begin{theorem} 
 Let $F, K$ be two bounded  measurable sets with $|F|, |K|<\infty$, and assume that $K$ is  convex. Then there holds
\begin{equation}\label{bm1} 
|F\oplus K|^{\frac{1}{n}}- |F|^{\frac{1}{n}}-|K|^{\frac{1}{n}}\geq C(n)\min \left(|F|^{\frac{1}{n}}, |K|^{\frac{1}{n}}\right) \alpha(F,K)^2
\end{equation} 
where
\begin{equation}\label{alpha}
\alpha(F,K)= \inf_{x\in \R^n} \frac{|F\Delta (x+\lambda K)|}{|F|}\qquad \lambda^n=\frac{|F|}{|K|}.\end{equation}
\end{theorem}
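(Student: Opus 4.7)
The plan is to decouple the two pieces of $\Per_r(E) = \frac{1}{2r}(|E\oplus B_r| - |E\ominus B_r|)$: use the quantitative Brunn--Minkowski inequality from the cited theorem to get a lower bound on $|E\oplus B_r|$ that carries the extra $\alpha^2$ term, and use only the \emph{classical} Brunn--Minkowski inequality to bound $|E\ominus B_r|$ from above. Subtracting and normalizing by $\Per_r(B_R)$ should produce exactly the claimed inequality, with no case analysis on $r$ versus $R$ needed at the end.

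First, I would identify the deficit. Taking $F=E$ and $K=B_r$ in \eqref{alpha} gives $\lambda^n = |E|/|B_r| = (R/r)^n$, hence $\lambda K = B_R$ and
\[
\alpha(E,B_r) \;=\; \inf_{x\in\R^n}\frac{|E\Delta B_R(x)|}{|B_R|} \;=:\;\alpha,
\]
which is precisely the quantity appearing on the right-hand side of \eqref{iso2}. Applying \eqref{bm1} and using $(1+t)^n\geq 1+nt$ for $t\geq 0$, I would obtain
\[
|E\oplus B_r| \;\geq\; \omega_n(R+r)^n + C(n)\,\omega_n (R+r)^{n-1}\min(R,r)\,\alpha^2,
\]
where $\omega_n = |B_1|$ and I have absorbed dimensional constants into $C(n)$.

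Next, for $|E\ominus B_r|$ I would use the geometric inclusion $(E\ominus B_r)\oplus B_r \subseteq E$, which follows from \eqref{omega r}: if $y\in E\ominus B_r$ then $d(y,\partial E)\geq r$, so $B_r(y)\subseteq E$ (after identifying $E$ with its density-one points). The classical Brunn--Minkowski inequality then yields $|E|^{1/n}\geq |E\ominus B_r|^{1/n} + r\omega_n^{1/n}$, hence
\[
|E\ominus B_r| \;\leq\; \omega_n(R-r)_+^{n} \;=\; |B_{(R-r)_+}|,
\]
so in particular $E\ominus B_r$ has zero measure as soon as $r\geq R$.

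Subtracting these two estimates and dividing by $2r$, the leading terms reconstruct $\Per_r(B_R) = \frac{\omega_n}{2r}\bigl((R+r)^n-(R-r)_+^n\bigr)$, and the surplus is
\[
\Per_r(E) - \Per_r(B_R) \;\geq\; \frac{C(n)\,\omega_n(R+r)^{n-1}\min(R,r)}{2r}\,\alpha^2.
\]
To finish, I would divide by $\Per_r(B_R)$ and use the upper bound $\Per_r(B_R)\leq n\,\omega_n(R+r)^{n-1}$ (a consequence of the mean value theorem when $r\leq R$, and of a direct computation $(R+r)^n/(2r)\leq n(R+r)^{n-1}$ when $r>R$). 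The $(R+r)^{n-1}$ factors cancel, and $\min(R,r)/r = \min(R/r,1)$, delivering \eqref{iso2} in a single line.

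The main obstacle, I expect, is not any analytic step but the bookkeeping that makes both regimes $r\leq R$ and $r>R$ fit into one estimate; the key point is to track $\min(R,r)$ rather than separately considering small and large $r$, and to notice that the same classical BM bound on $|E\ominus B_r|$ correctly handles the degenerate case $r\geq R$ (where it forces $|E\ominus B_r|=0$, matching $|B_{(R-r)_+}|=0$). Verifying the inclusion $(E\ominus B_r)\oplus B_r\subseteq E$ with the measure-theoretic $\partial E$ is the only subtlety, but it is immediate from the density-one convention stated before \eqref{omega r}.
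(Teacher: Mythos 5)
Your proposal does not prove the stated theorem. The statement at hand is the quantitative Brunn--Minkowski inequality \eqref{bm1} itself: for $F$ bounded measurable and $K$ bounded convex, the deficit $|F\oplus K|^{1/n}-|F|^{1/n}-|K|^{1/n}$ controls $\min(|F|^{1/n},|K|^{1/n})\,\alpha(F,K)^2$. This is a deep stability result which the paper does not prove at all; it is quoted verbatim from Corollary~1 of Barchiesi--Julin \cite{bj} (see also \cite{fmm,fj}), and its proof rests on a quantitative analysis of the Euclidean concentration inequality, far removed from the elementary manipulations in your outline. Your argument instead \emph{assumes} \eqref{bm1} --- you apply it with $F=E$, $K=B_r$ in your very first step --- and deduces from it the quantitative isoperimetric inequality \eqref{iso2}, i.e.\ Theorem \ref{quantiso}. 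As a proof of \eqref{bm1} this is circular: no step of your proposal addresses why the Minkowski-sum deficit should control the asymmetry $\alpha(F,K)$ in the first place.

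That said, read as a proof of Theorem \ref{quantiso}, your derivation is correct and is essentially the paper's own: the paper likewise combines \eqref{bm1} (with $F=E$, $K=B_r$) to handle $|E\oplus B_r|$ with the classical Brunn--Minkowski bound $|E\ominus B_r|\le|B_{(R-r)_+}|$ coming from the inclusion $(E\ominus B_r)\oplus B_r\subseteq E$, and then divides by $\Per_r(B_R)\le n\omega_n(R+r)^{n-1}$. Your bookkeeping with $(R-r)_+$ and $\min(R,r)$ even unifies the two cases $R<r$ and $R\ge r$ that the paper treats separately. But none of this bears on the statement actually posed, which must simply be taken from the literature or proved by entirely different means.
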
 


\smallskip

Using this result we can prove Theorem \ref{quantiso}. 

\begin{proof}[Proof of Theorem \ref{quantiso}]
Without loss of generality,
we can suppose that~$\partial E$ is bounded 
(otherwise it is easy to show that $\Per_r(E)=+\infty$).

 Observe that, since $|E|=|B_R|$, we have $|E|^{\frac{1}{n}}+|B_r|^{\frac{1}{n}}=|B_{R+r}|^{\frac{1}{n}}$.
Moreover, an easy computation shows that,  if $R\geq r$, then
\begin{equation}\label{contopalla} 
 \Per_r(B_R)=\frac{|B_{R+r}|-|B_{R-r}|}{2r}\leq \omega_n  \sum_{j=0}^{n-1} R^j  r^{n-1-j}\leq n\omega_n R^{n-1}.
 \end{equation} 
 On the other hand, if $R<r$, then  $\Per_r(B_R)\leq C r^{n-1}$. 
 
Recalling  \eqref{alpha},  we now compute
\[
 \alpha(E, B_r)=\inf_{x\in \R^n} \frac{|E\Delta B_R(x)|}{|E|}=\inf_{x\in \R^n} \frac{|E\Delta B_R(x)|}{|B_R|}.\] 
 Therefore   \eqref{bm1}, with $F=E$ and $K=B_r$, reads 
 \[
|E\oplus B_r|^{\frac{1}{n}}-|B_{R+r}|^{\frac{1}{n}}\geq  C(n) \min(r, R)  \left(\inf_{x\in\R^N}\frac{|E\Delta B_R(x)|}{|B_R|}\right)^2.\]
{}From this formula, recalling that $|E\oplus B_r|\geq |B_{R+r}|$ (again by  the fomula above),  and \eqref{contopalla},  we obtain that  
\begin{eqnarray}\nonumber
|E\oplus B_r|- |B_{R+r}|&=& \left(|E\oplus B_r|^{\frac{1}{n}}- |B_{R+r}|^{\frac{1}{n}}\right)\sum_{j=0}^{n-1} |E\oplus B_r|^{\frac{n-1-j}{n}}|B_{R+r}|^{\frac{j}{n}}
\\ \nonumber &\geq& n \left(|E\oplus B_r|^{\frac{1}{n}}- |B_{R+r}|^{\frac{1}{n}}\right)|B_{R+r}|^{\frac{n-1}{n}}
\\ \nonumber  &\geq & n C(n)(R+r)^{n-1} \min (r, R)  \left(\inf_{x\in\R^N}\frac{|E\Delta B_R(x)|}{|B_R|}\right)^2
\\
&\geq & C(n) r \Per_r(B_R) \min\left(1, \frac{R}{r}\right) \left(\inf_{x\in\R^N}\frac{|E\Delta B_R(x)|}{|B_R|}\right)^2.
\label{f1} \end{eqnarray} 
 
 Note that if $R<r$, then  $B_{R-r}=\emptyset$ and $E\ominus B_R=\emptyset$, otherwise $|B_R|=|E|\geq |B_r|$ in contradiction with the fact that $R<r$. 
So, if $R<r$, $|E\ominus B_r|=0=|B_{R-r}|$. Therefore, recalling the definition of $\Per_r$, 
we get 
\[\Per_r(E)-\Per_r(B_R)=  \frac{1}{2r} (|E\oplus B_r|- |B_{R+r}|)\]
from which applying inequality \eqref{f1} we obtain \eqref{iso2} for $R<r$. 

Let us now assume that $R\geq r$ and  take
$\widetilde R\in [0,R]$ such that  $| E\ominus B_r| =|B_{\widetilde R}|.$
Also, recalling that $\big(E\ominus B_r\big)\oplus B_r\subseteq E$,
we have that 
$$
|(E\ominus B_r)\oplus B_r|\le  |E| =|B_R|.
$$ Accordingly,
applying  the Brunn-Minkowski inequality we get that
\begin{eqnarray*}&&
|B_R|^{\frac{1}{n}}\ge |(E\ominus B_r)\oplus B_r|^{\frac{1}{n}} 
\\&&\qquad\ge 
|E\ominus B_r|^{\frac{1}{n}}+|B_r|^{\frac{1}{n}}
=
|B_{\widetilde R+r}|^{\frac{1}{n}},
\end{eqnarray*}
which implies that $\widetilde R\le R-r$.

{F}rom this, we obtain that
\begin{equation} \label{RMdue}
|E\ominus B_r|  =   
|B_{\widetilde{R}}|
\le 
|B_{R-r}|. 
\end{equation}

Putting together \eqref{f1} and \eqref{RMdue}, we finally obtain
\begin{multline*}\Per_r(E)-\Per_r(B_R)=  \frac{1}{2r} \left[|E\oplus B_r|- |B_{R+r}|-(|E\ominus B_r| -
|B_{R-r}|)\right]\\ \geq \frac{1}{2}  C(n) \Per_r(B_R)  \left(\inf_{x\in\R^N}\frac{|E\Delta B_R(x)|}{|B_R|}\right)^2. \end{multline*} 
\end{proof} 

We conclude the section with an isodiametric  estimate for convex sets.
We first recall  a well known result for convex sets (see \cite[Lemma 2.2]{gnr}):

\smallskip

\noindent There  exists a constant $C=C(n)$ such that, up to translations and rotations, for any  convex set $E\subseteq \R^n$  there exists $0\leq \lambda_1\leq \dots\leq \lambda_n$  
such that 
\begin{equation}\label{rettangolo} \Pi_{i=1}^n [0, \lambda_i]\subseteq E\subseteq C \Pi_{i=1}^n [0, \lambda_i].\end{equation}   
Notice that 
$$
n^{-\frac 1n}\,\diam(E)\leq \lambda_n\leq \diam(E).
$$

\begin{lemma}\label{lemmaconvex}

There exists $C=C(n)>0$ such that
\begin{equation}\label{diam}
\diam(E)\leq C\,\Per_r(E)^{n-1} |E|^{-n+2},
\end{equation}
for any convex set $E\subseteq \R^n$. 
Moreover,  there exists $C=C(n)>0$ such that
\begin{equation}\label{riesz1} 
\diam(E) \geq  C\, |E|^{\frac{2}{\alpha}}\Phi_\alpha(E)^{-\frac{1}{\alpha}}\end{equation}
Finally, for every $i<n$, there holds
\begin{equation} \label{riesz2}
\lambda_i \leq C \Per_r(E)^{n-2} |E|^{3-n-\frac{2}{\alpha}}\Phi_\alpha(E)^{\frac{1}{\alpha}}.
\end{equation}
\end{lemma}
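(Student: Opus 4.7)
The plan is to reduce each of the three estimates to elementary algebraic manipulations among the semi-axes $\lambda_1\leq\cdots\leq\lambda_n$ of the sandwiching rectangle in \eqref{rettangolo}. Write $R:=\Pi_{i=1}^n[0,\lambda_i]$, so that $R\subseteq E\subseteq CR$. From this inclusion one immediately reads off $\Pi_i\lambda_i\leq|E|\leq C^n\,\Pi_i\lambda_i$ and $\lambda_n\leq \diam(E)\leq C\sqrt{n}\,\lambda_n$. The one substantive ingredient I need is the lower bound
\[
\Per_r(E)\,\geq\,\tfrac12 P(E)\,\geq\,\Pi_{j=2}^{n}\lambda_j,
\]
where $P(E)$ is the classical De Giorgi perimeter of $E$. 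For the first inequality I would invoke Steiner's formula for the convex body $E$, which gives $|E\oplus B_r|\geq|E|+P(E)\,r$, together with the trivial bound $|E\ominus B_r|\leq|E|$. For the second I would use the Cauchy projection inequality $P(E)\geq 2|\pi_1(E)|$ combined with $\pi_1(E)\supseteq \pi_1(R)=\Pi_{j=2}^n[0,\lambda_j]$.

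Once these ingredients are in place, \eqref{diam} follows from the computation
\[
\Per_r(E)^{n-1}|E|^{-(n-2)}\,\geq\, c\,\frac{(\Pi_{j=2}^n\lambda_j)^{n-1}}{(\Pi_{j=1}^n\lambda_j)^{n-2}}\,=\,c\,\frac{\Pi_{j=2}^n\lambda_j}{\lambda_1^{n-2}}\,\geq\, c\,\lambda_n\,\geq\,c'\diam(E),
\]
where the second-to-last step uses $\Pi_{j=2}^{n-1}\lambda_j\geq\lambda_1^{n-2}$, as each $\lambda_j\geq\lambda_1$. The bound \eqref{riesz1} is immediate: since $|x-y|\leq\diam(E)$ for $x,y\in E$, one gets $\Phi_\alpha(E)\geq|E|^2\diam(E)^{-\alpha}$, which rearranges to \eqref{riesz1}.

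For \eqref{riesz2} it suffices to prove the bound for $i=n-1$, since $\lambda_i\leq\lambda_{n-1}$ whenever $i<n$. Substituting \eqref{riesz1} in the equivalent form $\Phi_\alpha^{1/\alpha}\geq c\,|E|^{2/\alpha}\lambda_n^{-1}$ together with the perimeter estimate, I obtain
\[
\Per_r(E)^{n-2}|E|^{3-n-\frac{2}{\alpha}}\Phi_\alpha(E)^{1/\alpha}\,\geq\, c\,\Bigl(\Pi_{j=2}^n\lambda_j\Bigr)^{n-2}\Bigl(\Pi_{j=1}^n\lambda_j\Bigr)^{3-n}\lambda_n^{-1}\,=\,c\,\lambda_1^{3-n}\,\Pi_{j=2}^{n-1}\lambda_j\,\geq\, c\,\lambda_{n-1},
\]
where the middle equality uses $|E|\sim\Pi_j\lambda_j$ and the final inequality again exploits $\lambda_j\geq\lambda_1$ (so that $\Pi_{j=2}^{n-2}\lambda_j\geq\lambda_1^{n-3}$).

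The main obstacle is establishing the lower bound $\Per_r(E)\geq cP(E)$ \emph{uniformly in $r>0$}; once that is in hand, all three estimates reduce to monomial comparisons among the $\lambda_j$. The only point requiring care is that the Steiner expansion has to be applied to arbitrary convex bodies for arbitrary $r>0$, not just infinitesimal $r$, but this is a classical fact of convex geometry and the only ingredient one really uses is the linear term $nW_1(E)r=P(E)r$, which always dominates from below.
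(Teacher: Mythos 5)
Your proof is correct, and its overall skeleton is the same as the paper's: sandwich $E$ between the rectangle $R=\Pi_{i=1}^n[0,\lambda_i]$ and $CR$ via \eqref{rettangolo}, and reduce all three estimates to monomial comparisons among the $\lambda_i$, using $\Pi_i\lambda_i\le|E|\le C^n\Pi_i\lambda_i$, $\lambda_n\sim\diam(E)$, and the trivial bound $\Phi_\alpha(E)\ge|E|^2\diam(E)^{-\alpha}$. Your algebra for \eqref{diam}, \eqref{riesz1} and \eqref{riesz2} matches the paper's computations \eqref{c1}--\eqref{c2} and the chain of inequalities ending the proof. The one place where you genuinely diverge is the key perimeter lower bound. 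The paper asserts, without proof, the two-sided comparison \eqref{perc}, namely $C_1\Pi_{i=2}^n\max(\lambda_i,r)\le\Per_r(E)\le C_2\Pi_{i=2}^n\max(\lambda_i,r)$, and then only uses the lower bound (after discarding the $\max$ with $r$). You instead prove the weaker but sufficient one-sided bound $\Per_r(E)\ge\frac12 P(E)\ge\Pi_{j=2}^n\lambda_j$ via the Steiner polynomial (nonnegativity of the quermassintegrals gives $|E\oplus B_r|\ge|E|+P(E)r$, and $|E\ominus B_r|\le|E|$) together with Cauchy's projection inequality $P(E)\ge2\mathcal{H}^{n-1}(\pi_1(E))\ge2\Pi_{j=2}^n\lambda_j$. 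This buys you a fully self-contained and rigorous justification of the only substantive ingredient, valid uniformly in $r>0$; what it gives up is the sharper two-sided, $r$-dependent comparison \eqref{perc}, whose upper-bound half is not needed in this lemma but reappears later in the paper (e.g.\ in the cylinder estimate \eqref{perc1} of Theorem \ref{mgrande}), so your route proves the lemma but does not substitute for \eqref{perc} elsewhere.
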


\begin{proof} The argument is similar to the one in  \cite[Lemma 2.1]{gnr}, in the case of the classical perimeter.
 
We observe that by \eqref{rettangolo} we have 
\[ \Pi_{i=1}^n\lambda_i \leq |E|\leq C^n \Pi_{i=1}^n \lambda_i,\] 
and there exist constants $C_1, C_2 $ depending only on $n$ such that
\begin{equation}\label{perc} C_1 \Pi_{i=2}^{n}\max(\lambda_i, r) \leq \Per_r(E)\leq C_2  \Pi_{i=2}^n\max(\lambda_i, r). \end{equation} 
Therefore, taking the ratio between the terms above, we get 
\begin{equation}\label{c1} \frac{|E|}{\Per_r(E)} \leq \frac{ C^n \Pi_{i=1}^n \lambda_i}{ C_1 \Pi_{i=2}^{n}\max(\lambda_i, r)}\leq C  \lambda_1.\end{equation}

Recalling that the $\lambda_i$ are decreasing, we compute
\begin{equation}\label{c2} \Per_r(E)\geq C_1  \Pi_{i=2}^n\max(\lambda_i, r)\geq  C_1\max( \lambda_n,r) (\max(\lambda_1, r))^{n-2}\geq  C_1 \lambda_n\lambda_1^{n-2}. \end{equation}
By putting togheter \eqref{c1} and \eqref{c2} and   we obtain the thesis \eqref{diam}.

We observe now that 
\[\Phi_\alpha(E)\geq \frac{|E|^2}{\diam(E)^{\alpha}},\] 
which immediately gives \eqref{riesz1}.

Using now \eqref{perc}, \eqref{c1}, \eqref{riesz1} and the fact that $\lambda_j\geq \lambda_1$ for every $j$, we get that
\begin{multline*} 
\Per_r(E)\geq C_1 \Pi_{i=2}^{n}\max(\lambda_i, r) \geq C_1 \lambda_n \Pi_{i=2}^{n-1} \lambda_i 
\\ \geq C |E|^{2/\alpha} \Phi_\alpha(E)^{-1/\alpha} (\lambda_1)^{n-3}\lambda_i
\\ \geq C |E|^{2/\alpha} \Phi_\alpha(E)^{-1/\alpha} |E|^{n-3}\Per_r(E)^{-n+3} \lambda_i,
\end{multline*} 
for all $i<n$, from which we deduce \eqref{riesz2}. 
\end{proof}

\section{Isoperimetric problems with repulsive interaction terms} 

Given $m>0$, we consider the following problem:
\begin{equation}\label{rieszisoconvex} 
\min_{|E|=m,\ \text{$E$ convex}} \Per_{r m^{1/n}}(E)+\Phi_\alpha(E). 
\end{equation}
Note that the we consider the Minkowski perimeter at a scale $rm^{1/n}$, depending on the volume of the sets. 
This is a natural scale if we want to analyze the behavior of minimizers for small volumes. Indeed, if $E_m$ is a minimizer for
 problem \eqref{rieszisoconvex}, then it is easy to check that 
 $\tilde E_m=m^{-1/n} E_m$ is a minimizer for the rescaled problem
\begin{equation}\label{rieszisrescaledm} 
\min_{|E|=1,\ \text{$E$ convex} } \Per_{r}(E)+m^{\frac{n+1-\alpha}{n}}\Phi_\alpha(E). 
\end{equation}

By minimizing \eqref{rieszisoconvex}, we observe a competition between the perimeter term, which favors round shapes by the isoperimetric inequality, 
and the Riesz energy $\Phi_\alpha$, for which balls are actually maximizers, due to the 
Riesz inequality \cite{riesz}. 

We point out that the convexity constraint is quite restrictive and  could be removed, leading to the more general problem
\begin{equation}\label{riesziso} 
\min_{|E|=m} \Per_{r m^{1/n}}(E)+\Phi_\alpha(E). 
\end{equation}
However, the known  strategy to attack these problems is based on regularity theory for (almost) minimizers of the perimeter functionals, and in the case  of the $r$-perimeter $\Per_r$ such theory  is currently not available.

\begin{theorem}\label{isoconvex}
For every $m>0$ there exists a minimizer $E_m$ of the problem \eqref{rieszisoconvex}.

Moreover,  letting $\tilde E_m= m^{-\frac{1}{n}}E_m$, as $m\to 0$ we have that, up to translations,  
$|\tilde E_m \Delta B|\to 0 $ and $d_H(\tilde E_m, B)\to 0$, where $B$ is the ball with  $|B|=1$ and $d_H$ denotes the Hausdorff distance. 
\end{theorem}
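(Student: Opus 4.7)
The plan splits naturally into existence and asymptotics, with both parts relying on Blaschke selection and on the isodiametric bound of Lemma \ref{lemmaconvex} to enforce compactness.

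\emph{Existence.} For fixed $m>0$ I take a minimizing sequence $(E_k)$ of convex sets with $|E_k|=m$. Comparing with the ball $B_{R_m}$ of volume $m$ and using $\Phi_\alpha\ge 0$ gives a uniform bound on $\Per_{rm^{1/n}}(E_k)$, hence by Lemma \ref{lemmaconvex} a uniform bound on $\diam(E_k)$; after translation I may assume $E_k\subseteq B_M$ for a fixed large $M$. Blaschke selection then extracts a Hausdorff limit $E$, which is convex. To pass to the limit in the energy I would use that on bounded convex bodies Hausdorff convergence is preserved by the Minkowski operations $\cdot\oplus B_\rho$ and $\cdot\ominus B_\rho$ and that it implies $L^1$ convergence of the characteristic functions; in particular $\Per_{rm^{1/n}}$ is continuous along $(E_k)$ through its definition \eqref{perr}, and $\Phi_\alpha$ is continuous by dominated convergence with the integrable majorant $|x-y|^{-\alpha}\mathbf{1}_{B_M\times B_M}$ (integrable since $\alpha<n$). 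This yields $|E|=m$ and the minimality of $E$.

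\emph{Asymptotic shape as $m\to 0$.} The rescaled set $\tilde E_m$ solves \eqref{rieszisrescaledm}. Comparing with the unit-volume ball $B$ and dropping the nonnegative Riesz term on the left,
\[
0\le \Per_r(\tilde E_m)-\Per_r(B)\le m^{(n+1-\alpha)/n}\Phi_\alpha(B),
\]
whose right-hand side tends to $0$ as $m\to 0$, since $\alpha\in(0,n)$ makes the exponent positive. I would then plug this into Theorem \ref{quantiso}: as $|B|=1$ fixes the radius $R$ and hence $\min(R/r,1)$, the inequality forces
\[
\inf_{x\in\R^n}|\tilde E_m\Delta B(x)|\longrightarrow 0.
\]
Taking near-optimal translations $x_m$ and replacing $\tilde E_m$ by its translate gives $|\tilde E_m\Delta B|\to 0$. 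To upgrade to Hausdorff convergence I again use Lemma \ref{lemmaconvex} to bound $\diam(\tilde E_m)$ uniformly, together with the fact that $|\tilde E_m\cap B|\to 1$ to confine the translated sets to a fixed ball; by Blaschke every subsequence has a further subsequence converging in Hausdorff distance, whose limit must agree with the $L^1$ limit $B$, so the whole sequence converges to $B$ in Hausdorff distance.

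\emph{Main obstacle.} The most delicate technical step is the continuity of $\Per_r$ and $\Phi_\alpha$ under Hausdorff convergence of convex bodies: this is standard, but relies on continuity of Minkowski addition and erosion on bounded convex sets rather than on any intrinsic property of $\Per_r$. Conceptually, the crucial ingredient of the asymptotic analysis is the strict positivity of the scaling exponent $(n+1-\alpha)/n$, which turns the Riesz term into a negligible perturbation in the rescaled problem and makes Theorem \ref{quantiso} directly applicable.
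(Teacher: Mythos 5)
Your proposal is correct and follows essentially the same strategy as the paper: comparison with the ball plus Lemma \ref{lemmaconvex} for the diameter bound and compactness, and the quantitative isoperimetric inequality of Theorem \ref{quantiso} for the asymptotic shape. There are two points where you deviate, both legitimate. In the existence step the paper invokes lower semicontinuity of $\Per_r$ with respect to $L^1$ convergence (from \cite{cdnv}) rather than your continuity of $E\mapsto E\oplus B_\rho$ and $E\mapsto E\ominus B_\rho$ under Hausdorff convergence of convex bodies; your route is fine for convex bodies with nonempty interior, but note that only lower semicontinuity is actually needed, and the erosion $E\ominus B_\rho$ is the delicate operation (one shows $(E_k)\ominus B_\rho$ is sandwiched between $E\ominus B_{\rho\pm\eps_k}$ and uses continuity of $s\mapsto |E\ominus B_s|$ for convex $E$), so you should be explicit there. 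In the asymptotic step you bound the Riesz deficit crudely by $\Phi_\alpha(B)-\Phi_\alpha(\tilde E_m)\le \Phi_\alpha(B)$, which combined with \eqref{iso2} gives $\inf_x|\tilde E_m\Delta B(x)|\le C\,m^{(n+1-\alpha)/(2n)}\to 0$; the paper instead proves the Lipschitz estimate \eqref{contopot}, $|\Phi_\alpha(B)-\Phi_\alpha(\tilde E_m)|\le C\inf_x|\tilde E_m\Delta B(x)|$, which, played against the quadratic lower bound in \eqref{iso2}, yields the sharper rate $\inf_x|\tilde E_m\Delta B(x)|\le C\,m^{(n+1-\alpha)/n}$. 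For the qualitative statement of the theorem your weaker rate suffices, so nothing is lost; the paper's extra work buys a quantitative decay that is in the spirit of (though not literally used in) the subsequent rigidity result.
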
 

\begin{proof}
First we prove existence of minimizers. 
Let $R>0$ such that $|B_R|=\omega_n R^n=m$.  
Let $E_n$ be  a minimizing sequence such that $|E_n|=|B_R|=m$ and we assume that
\[\Per_r(E_n)\leq \Per_r(E_n)+\Phi_\alpha(E_n)\leq \Per_r(B_R)+\Phi_\alpha(B_R).\] 
By the estimate \eqref{diam}, we conclude  that there exists a constant  $C$ depending on $m, n$ 
such that $\diam(E_n)\leq C$. Therefore, up to a translation, using the compactness in $BV$ (see \cite{afp}), we can extract a sequence $E_n$ which converges in $L^1$ to a convex set $E$ with volume $m$. 
Since $\diam(E_n)\leq C$, up to translation we can assume that $E_n\subseteq K$, where $K$ is a compact set. Then 
the sequence $E_n$ is also  precompact in the Hausdorff topology, and so $E_n\to E$ also in Hausdorff sense, since the sets are all convex. 
 
Since the $r$-perimeter functional is lower semicontinuous with respect to the $L^1$ convergence, and the Riesz potential 
 is lower semicontinuous with respect to the Hausdorff convergence, we conclude that  $E$ is a minimizer of  \eqref{rieszisoconvex}.

We now rescale the problem as follows. 
We define 
\[ \tilde E= m^{-\frac{1}{n}} E,\]
so that $|\tilde E|=1$. 
An easy computation shows that
\[\Phi_\alpha(\tilde E)= m^{\frac{\alpha-2n}{n}} \Phi_\alpha(E)\qquad \Per_{r}(\tilde E)=m^{\frac{1-n}{n}}\Per_{rm^{1/n}} (E).\]
Therefore, as observed above, if 
 $E_m$ is a minimizer of \eqref{rieszisoconvex},
then $\tilde E_m=  m^{-\frac{1}{n}} E_m$ is a minimizer of \eqref{rieszisrescaledm}. 

For all $z\in \R^n$ and $s>0$, we denote with $B(z)$ the ball $B_{\omega_n^{1/n}}(z)$.
\begin{eqnarray*} 
&& |\Phi_\alpha(B(z))-\Phi_\alpha(\tilde E_m)|= \left|\int_{B(z)}\int_{B(z)} \frac{1}{|x-y|^\alpha}dxdy-\int_{\tilde E_m}\int_{\tilde E_m} \frac{1}{|x-y|^\alpha}dxdy \right|\\
\nonumber 
&\leq & 2\int_{\tilde E_m\Delta B(z)}\int_{\tilde E_m \cap B(z)} \frac{1}{|x-y|^\alpha}dxdy  +2  
 \int_{\tilde E_m\Delta B(z)}\int_{\tilde E_m \Delta B(z)} \frac{1}{|x-y|^\alpha}dxdy\\ 
\nonumber &=& 2\int_{\tilde E_m\Delta B(z)}\int_{\tilde E_m \cup B(z)} \frac{1}{|x-y|^\alpha}dxdy    
\leq  \int_{\tilde E_m\Delta B(z)} \int_{\tilde E_m \cup B(z)} \sup_{t\in \R^n} \frac{1}{|t-y|^\alpha} dxdy\\
\nonumber & \leq&   2|\tilde E_m\Delta B(z)|\sup_{t\in \R^n} \left(\int_{B_s(t)}  \frac{1}{|t-y|^\alpha}dy \right. +   \left. \int_{(\tilde E_m \cup B(z))\setminus B_s(t)}  \frac{1}{|t-y|^\alpha} dy \right) 
\\ 
 & \leq & 2|\tilde E_m\Delta B(z)|\left(n\omega_n \frac{1}{n-\alpha} s^{n-\alpha}+ 2 s^{-\alpha} \right).  \nonumber 
\end{eqnarray*} 
The previous term  is minimal when $s=\left(\frac{2(n-\alpha)}{n \omega_n}\right)^{\frac{1}{n}}$ giving that there exists a constant $C(n,\alpha)$ such that 
\begin{equation}\label{contopot} 
|\Phi_\alpha(B)-\Phi_\alpha(\tilde E_m)|\leq C(n,\alpha)  \inf_{x\in\R^n} |\tilde E_m\Delta B(x)|.
\end{equation} 

Using the minimality of $\tilde E_m$, \eqref{iso2} and \eqref{contopot}, and the invariance by translation of the energy,  we get 
\[\Per_r(B) \min\left(\frac{\omega_n^{1/n}}{r},1\right)\left(\inf_{x\in\R^n}\frac{|\tilde E_m\Delta B(x)|}{|B|}\right)^2-  m^\frac{n+1-\alpha}{n}\, C(n,\alpha) \inf_{x\in\R^n}|\tilde E_m\Delta B(x)| \leq 0,\]
which implies 
\[ \inf_{x\in\R^n}|\tilde E_m\Delta B(x)|\leq C(n,m,\alpha)\, m^\frac{n+1-\alpha}{n}.\]
So, letting  $m\to 0$, we conclude that the sets 
$\tilde E_m$ converge to $B$ in $L^1$, up to translations. Finally, 
by Lemma \ref{lemmaconvex} we have that $\diam(\tilde E_m)\leq  C$,  so that
$\tilde E_m\to B$ also in the Hausdorff distance.
\end{proof} 

We now show that the rescaled minimizers $\tilde E_m$ given by Theorem \ref{isoconvex}
are indeed balls for $m$ small enough.
An analogous result when $\Per_r$ is replaced by the usual perimeter 
has been proved in \cite{km1, km2} (see also \cite{ffmmm} for a generalization to fractional perimeters).

\begin{theorem}\label{teopalla}
Let $\alpha\in (0,n-1)$ and let $r\in (0,1)$. 
Then there exists $m_0=m_0(n,\alpha)>0$
such that, up to translations, $\tilde E_m=B$ for all $m\in (0,m_0)$.
\end{theorem}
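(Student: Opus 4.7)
The plan is to argue by contradiction via a Fuglede-type stability argument. Suppose there is a sequence $m_k \downarrow 0$ with rescaled minimizers $\tilde E_{m_k}$ of \eqref{rieszisrescaledm} satisfying $\tilde E_{m_k} \neq B$ up to translations. By Theorem \ref{isoconvex}, after translating we may assume that $\tilde E_{m_k} \to B$ in Hausdorff distance. Combining convexity with Hausdorff closeness, for $k$ sufficiently large each $\tilde E_{m_k}$ admits a unique radial parameterization
\[
\tilde E_{m_k} = \{t\theta : 0 \leq t \leq R_B + u_k(\theta),\ \theta \in S^{n-1}\}, \qquad R_B := \omega_n^{-1/n},
\]
with a Lipschitz profile $u_k\colon S^{n-1} \to \R$ satisfying $\|u_k\|_{\infty} \to 0$ (the Lipschitz constant being controlled uniformly by the Hausdorff distance, thanks to convexity). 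The volume constraint forces $\int_{S^{n-1}} u_k\,d\sigma = O(\|u_k\|_{L^2}^2)$, and by a further translation we may place the barycenter of $\tilde E_{m_k}$ at the origin, eliminating the translation degeneracy.

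The heart of the argument is to establish two matching second-order estimates. First, a Fuglede-type lower bound: there exists $c_P = c_P(n, r) > 0$ such that, for every sufficiently small Lipschitz $u$ compatible with the volume and barycenter constraints,
\[
\Per_r(\tilde E) - \Per_r(B) \geq c_P \|u\|_{L^2(S^{n-1})}^2.
\]
This is strictly stronger than Theorem \ref{quantiso} (which only controls $|\tilde E \Delta B|^2$, an $L^1$-type quantity) and is obtained by expanding $|\tilde E \oplus B_r|$ and $|\tilde E \ominus B_r|$ to second order in $u$, using the Steiner-type mixed-volume representation for convex bodies when $r$ is smaller than the inradius of $B$, and checking that the resulting quadratic form is positive definite modulo translations. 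Second, a companion upper bound for the Riesz deficit: the hypothesis $\alpha \in (0, n - 1)$ ensures that $V_B \in C^1(\overline{B})$ with bounded gradient, and since $V_B|_{\partial B}$ is a constant by radial symmetry, the first-order term in a Taylor expansion of $\Phi_\alpha$ at $B$ vanishes up to the volume constraint. A direct second-order expansion then yields
\[
\Phi_\alpha(B) - \Phi_\alpha(\tilde E) \leq C_\Phi \|u\|_{L^2(S^{n-1})}^2
\]
for some $C_\Phi = C_\Phi(n, \alpha) > 0$.

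Combining these two estimates with the minimality of $\tilde E_{m_k}$ in \eqref{rieszisrescaledm},
\[
c_P \|u_k\|_{L^2}^2 \leq \Per_r(\tilde E_{m_k}) - \Per_r(B) \leq m_k^{(n+1-\alpha)/n}\bigl(\Phi_\alpha(B) - \Phi_\alpha(\tilde E_{m_k})\bigr) \leq m_k^{(n+1-\alpha)/n} C_\Phi \|u_k\|_{L^2}^2.
\]
Choosing $m_0 > 0$ so small that $m_0^{(n+1-\alpha)/n} < c_P/C_\Phi$ forces $\|u_k\|_{L^2} = 0$ for all $m_k \in (0, m_0)$, i.e.\ $\tilde E_{m_k} = B$, contradicting our assumption. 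This gives the desired $m_0$.

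The main obstacle is the Fuglede-type lower bound for $\Per_r$. In the classical setting of \cite{km1, km2, ffmmm}, one first upgrades $L^1$-closeness of the minimizer to $C^{1,\alpha}$-closeness via regularity theory for almost-minimizers of the perimeter, and then applies Fuglede's smooth second-variation estimate; for the Minkowski-type perimeter $\Per_r$ no such regularity theory is currently available (as the authors remark just before the statement of the theorem), so the argument must proceed directly with the Lipschitz radial parameterization that convexity provides. One must expand $|\tilde E \oplus B_r|$ and $|\tilde E \ominus B_r|$ carefully in $u$, identify the quadratic form that arises at second order, and verify its uniform $L^2$-coercivity modulo translations. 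The $L^1$-type quantitative isoperimetric inequality of Theorem \ref{quantiso} is strictly weaker than what is needed here but serves as a useful consistency check and indicates the shape of the desired coercivity estimate.
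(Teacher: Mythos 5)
Your overall scheme --- a quadratic lower bound for the perimeter deficit matched against a quadratic upper bound for the Riesz deficit, with the factor $m^{(n+1-\alpha)/n}$ doing the absorption --- is the same as the paper's, and your treatment of the Riesz term (second-order expansion of $\Phi_\alpha$ at $B$ using $\alpha<n-1$, i.e.\ \cite[Eq.~(6.8)]{km2}) also matches. The genuine gap is the Fuglede-type coercivity $\Per_r(\tilde E)-\Per_r(B)\ge c_P\|u\|^2_{L^2(\partial B)}$, which you assert but do not prove. This is not a routine verification: one would have to expand both $|\tilde E\oplus B_r|$ and $|\tilde E\ominus B_r|$ to second order in the radial perturbation $u$ (the inner parallel body $\tilde E\ominus B_r$ admits no Steiner-type formula, even for convex sets) and establish uniform positive-definiteness of the resulting quadratic form modulo translations. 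You correctly identify this as ``the main obstacle,'' but the proof is simply not complete without it, and no citable reference supplies it.

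Moreover, your premise that Theorem \ref{quantiso} is ``strictly weaker than what is needed'' is mistaken, and discarding it is precisely what creates the gap. The paper closes the argument on the $L^1$ side rather than the $L^2$ side: since $u_m\to 0$ in $W^{1,\infty}(\partial B)$, the upper bound $\Phi_\alpha(B)-\Phi_\alpha(\tilde E_m)\le C(n,\alpha)\|u_m\|^2_{L^2(\partial B)}$ is further bounded by $C'(n,\alpha)\,|\tilde E_m\Delta B|^2$, i.e.\ by the \emph{square} of the $L^1$ asymmetry. Chaining this with the lower bound $C(n)\,|\tilde E_m\Delta B|^2\le \Per_r(\tilde E_m)-\Per_r(B)$ from Theorem \ref{quantiso} (the factor $\min(R/r,1)$ there is bounded below by a dimensional constant, since $r<1$ and the radius of $B$ depends only on $n$) and with the minimality of $\tilde E_m$ gives $C(n)\,|\tilde E_m\Delta B|^2\le m^{(n+1-\alpha)/n}\,C'(n,\alpha)\,|\tilde E_m\Delta B|^2$, which forces $\tilde E_m=B$ once $m^{(n+1-\alpha)/n}<C(n)/C'(n,\alpha)$. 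No new stability inequality for $\Per_r$ is needed, and no contradiction argument either; I suggest you restructure your proof along these lines rather than attempting the second variation of $\Per_r$.
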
 

\begin{proof}
Since the sets $\tilde E_m$ are all convex, uniformly bounded, with volume $1$, they have uniformly Lipschitz boundaries.  Moreover,  by Theorem \ref{isoconvex}, up to suitable translations we can write
\[\partial \tilde E_m =\{(\omega_n^{1/n}+u_m(x))x \ | \  x\in\partial B\},\]
where $u_m\to 0$ in $W^{1,\infty}(\partial B)$ as $m\to 0$. This holds also in the case $\alpha\in [n-1, n)$. 
 
Using the fact that $\alpha<n-1$, recalling \cite[Eq. (6.8)]{km2}, for $m$ sufficiently small we have that
\begin{equation}\label{eqcirillo}
\Phi_\alpha(B)-\Phi_\alpha(\tilde E_m)\le C(n,\alpha) \|u_m\|^2_{L^2(\partial B)}
\le C'(n,\alpha) |\tilde E_m\Delta B|^2.
\end{equation}
By the minimality of $\tilde E_m$, \eqref{iso2} and \eqref{eqcirillo} we then get
\begin{eqnarray*}
C(n) |\tilde E_m\Delta B|^2 &\le& \Per_r(\tilde E_m)-\Per_r(B)\\
&\le& m^\frac{n+1-\alpha}{n} (\Phi_\alpha(B)-\Phi_\alpha(\tilde E_m))
\le  m^\frac{n+1-\alpha}{n} C'(n,\alpha) |\tilde E_m\Delta B|^2,
\end{eqnarray*}
which implies that $ \tilde E_m= B$ if $m$ is small enough.
\end{proof} 
 
Finally, we give a description  of the asymptotic shape of minimizers of \eqref{rieszisoconvex}
as $m\to +\infty$. 



\begin{theorem}\label{mgrande} 
Let $\alpha\in (0,n-1)$ and let $E_m$ be minimizers of \eqref{rieszisoconvex}.
Then, as $m\to +\infty$, 
\begin{equation}\label{eqhat} 
\widehat E_m=  m^{-\left(\frac{n-1}{n}\right)\left(\frac{n+1-\alpha}{\alpha(n-1)+1}\right)-\frac 1n}
E_m \to [0, \widehat L]\times \{0\}^{n-1},
\end{equation}
in the  Haudorff distance,
up to rotations, translations and subsequences.
\end{theorem}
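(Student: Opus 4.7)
My plan is to extract enough control on the sizes $\lambda_1\le\dots\le\lambda_n$ of the minimizer $E_m$, via Lemma~\ref{lemmaconvex}, so that after rescaling by $m^{-\gamma}$ with
\[
\gamma=\frac{(n-1)(n+1-\alpha)}{n(\alpha(n-1)+1)}+\frac{1}{n}=\frac{n}{\alpha(n-1)+1},
\]
the longest side $\lambda_n(\widehat E_m)$ stays bounded above and away from zero, while the transverse widths $\lambda_i(\widehat E_m)$ vanish for every $i<n$. A Blaschke selection argument combined with the convexity of $\widehat E_m$ then identifies the subsequential limit as a segment along a coordinate axis.

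First I would bound the minimum energy from above by testing against an explicit elongated convex set $T_m$ of volume $m$, for instance a thin cylindrical set $T_m=[0,L_*]\times Q_*$ with $L_*=cm^{\delta_*}$ and $(n-1)$-dimensional cross-section $Q_*$ of measure $m/L_*$, with $\delta_*$ chosen to balance the two competing terms. Using \eqref{perc} to evaluate $\Per_{rm^{1/n}}(T_m)$ (distinguishing whether the transverse scale lies above or below the nonlocal parameter $s=rm^{1/n}$) and estimating $\Phi_\alpha(T_m)$ by splitting the Riesz potential into near- and far-field contributions (whose dominant behavior depends on the sign of $1-\alpha$), one obtains $\Per_{rm^{1/n}}(T_m)+\Phi_\alpha(T_m)\le Cm^{\delta}$ with
\[
\delta=\frac{2+\alpha(n-2)}{\alpha(n-1)+1}.
\]
By minimality of $E_m$ and positivity of each summand this yields $P:=\Per_{rm^{1/n}}(E_m)\le Cm^{\delta}$ and $\Phi:=\Phi_\alpha(E_m)\le Cm^{\delta}$.

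These bounds feed into Lemma~\ref{lemmaconvex}. Estimate \eqref{diam} gives $\diam(E_m)\le CP^{n-1}m^{2-n}\le Cm^{(n-1)\delta-(n-2)}$, and \eqref{riesz1} gives $\diam(E_m)\ge cm^{2/\alpha}\Phi^{-1/\alpha}\ge cm^{(2-\delta)/\alpha}$. The algebraic identity $(n-1)\delta-(n-2)=(2-\delta)/\alpha=\gamma$ (which is precisely the condition that fixes the value of $\delta$, and hence of $\gamma$) implies that both exponents equal $\gamma$, so $\diam(E_m)$ is of order $m^\gamma$. A parallel computation with \eqref{riesz2}, combined with the bounds on $P$ and $\Phi$ above, gives $\lambda_i(E_m)\le Cm^{\gamma'}$ with $\gamma'<\gamma$ for every $i<n$. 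Consequently $\lambda_i(\widehat E_m)\to 0$ for $i<n$, while $\diam(\widehat E_m)$ remains bounded above and below. After a rotation aligning the longest axis of $E_m$ with $e_n$ and a translation placing one endpoint of that axis at the origin, Blaschke's selection principle produces a subsequence of $\widehat E_m$ converging in the Hausdorff metric to a compact convex set $\widehat E\subset\R^n$. The vanishing transverse widths force $\widehat E\subset \R\times\{0\}^{n-1}$; by convexity $\widehat E=[0,\widehat L]\times\{0\}^{n-1}$ for some $\widehat L\ge 0$, and the lower bound on $\diam(\widehat E_m)$ gives $\widehat L>0$.

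The main obstacle is the careful calibration of exponents underpinning the test-set computation: one needs to choose $T_m$ so that both $P(T_m)$ and $\Phi(T_m)$ are of the same order $m^\delta$, with $\delta$ itself pinned down by demanding that \eqref{diam} and \eqref{riesz1} return matching exponents for $\diam(E_m)$. Keeping this balance consistent requires separating the regimes in \eqref{perc} according to whether each $\lambda_i$ exceeds $rm^{1/n}$, the near- vs.~far-field split in the Riesz potential (governed by the sign of $1-\alpha$), and the final bookkeeping that produces the particular formula for $\gamma$ stated in the theorem.
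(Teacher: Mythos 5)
Your proposal is correct and follows essentially the same strategy as the paper: bound the energy from above by an elongated cylindrical competitor, then use the three estimates of Lemma~\ref{lemmaconvex} to pin the diameter between two matching powers of $m$ and to force the transverse widths to vanish after rescaling, concluding by compactness of convex sets. The only (cosmetic) difference is that you work at the original volume scale $m$ rather than with the unit-volume rescaling $\tilde E_m$ used in the paper; your exponents $\delta=\frac{2+\alpha(n-2)}{\alpha(n-1)+1}$, $\gamma=\frac{n}{\alpha(n-1)+1}$ and $\gamma'=\frac{\alpha-1}{\alpha(n-1)+1}<\gamma$ all agree with the paper's after undoing that rescaling.
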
 

\begin{proof} 
We consider $\tilde E_m=m^{-1/n} E_m$, so that $\tilde E_m$ is a minimizer of \eqref{rieszisrescaledm}.   
Let us compute the energy of the cylinder $C_{L}=B_R\times[0,L]$ such that $|C_L|=1$, so that
\begin{equation}\label{rl} 
R=\left(\frac{1}{\omega_{n-1}L}\right)^{\frac{1}{n-1}}.
\end{equation}
Eventually we will choose $L$ in dependance on $m$,  such that $L\to +\infty$ as $m\to +\infty$. 
So,  without loss of generality we  may assume that $L\ge\max(R,r,1)$.  Therefore
there exists a constant $C=C(n)$  such that 
 \begin{equation}\label{perc1}
 \Per_r(C_L)\leq C L \max(r,R)^{n-2} +2 \max(R,r)^{n-1} \leq  C L L^{\frac{n-2}{1-n}}=C L^{\frac{1}{n-1}} . \end{equation}
Moreover, recalling also \eqref{rl}, there exists a constant $C=C(n)$ such that 
\begin{multline}   \label{phic} 
\Phi_\alpha(C_L)\geq \\
L \left(\sum_{i=1}^{L-1}\int_{B_R\times[0,1]} \int_{B_R\times[i, i+1]} \frac{1}{|x-y|^\alpha}dxdy+\int_{B_R\times[0,1]} \int_{B_R\times[0,1]}
 \frac{1}{|x-y|^\alpha}dxdy \right)\\ 
\geq C L^{2-\alpha}R^{2(n-1)}+ C LR^{2(n-1)} =C L^{-\alpha},  
\end{multline} 
where the second integral is bounded by $L|B_R\times [0,1]|^2$ due to the fact that $\alpha<n-1$. 

Using the minimality of $\tilde E_m$ together with \eqref{phic} and \eqref{perc1}, we get  that there exists a constant $C=C(n)$ such that
\[\Phi_\alpha(\tilde E_m)\leq C L^{-\alpha}+ m^{-\frac{n+1-\alpha}{n}}  C L^{\frac{1}{n-1}}\leq C L^{-\alpha} . \]
From this we deduce, recalling   the inequality \eqref{riesz1},   that there exists a constant $C=C(n)$
\begin{equation}\label{diam1} \diam(\tilde E_m)\geq\Phi_\alpha(\tilde E_m)^{-1/\alpha}  \geq C L.\end{equation}

Using again the minimality of $\tilde E_m$ and   the inequalities \eqref{diam}, \eqref{perc1} and \eqref{phic},  we then obtain
\begin{multline}\label{diam2} \diam(\tilde E_m)\leq C   \Per_r(\tilde E_m)^{n-1}\\
\leq  C  \left[ \Per_r(C_L)+ m^{\frac{n+1-\alpha}{n}}\Phi_\alpha(C_L)\right]^{n-1}
\\\leq C \left[L^{\frac{1}{n-1}}+ m^{\frac{n+1-\alpha}{n}}L^{-\alpha}\right]^{n-1}.
\end{multline} 
The previous term is minimal for \begin{equation}\label{l}
L= m^{\left(\frac{n-1}{n}\right)\left(\frac{n+1-\alpha}{\alpha(n-1)+1}\right)}.
\end{equation} 
For this choice of $L$, we get, putting together \eqref{diam1} and \eqref{diam2}
we get that there exist $C=C(n)$ and $C'=C'(n)$ such that 
\begin{equation}\label{diam3} C m^{\left(\frac{n-1}{n}\right)\left(\frac{n+1-\alpha}{\alpha(n-1)+1}\right)}\leq \diam(\tilde E_m)\leq C'  m^{\left(\frac{n-1}{n}\right)\left(\frac{n+1-\alpha}{\alpha(n-1)+1}\right)}. \end{equation} 

Let now $\tilde \lambda_1\leq \dots\tilde \lambda_n$ such that \eqref{rettangolo} holds for 
$E=\tilde E_m$. Then, using \eqref{riesz2} and proceeding as in the estimates \eqref{diam1} and \eqref{diam3}, we obtain that  
\begin{equation}\label{diampiccoli} \tilde \lambda_i\leq C \Per_r(\tilde E_m)^{n-2}\Phi_\alpha(\tilde E_m)^{1/\alpha}\leq C L^{\frac{n-2}{n-1}}L^{-1}\leq C m^{-\left(\frac{1}{n}\right)\left(\frac{n+1-\alpha}{\alpha(n-1)+1}\right)}
\end{equation} 
for every $i<n$. 
As a consequence, if we define $\widehat E_m$ as in \eqref{eqhat},
from \eqref{diam3} we obtain that there exist $C, C'$  depending only on $n$ such that 
\[C\leq \diam(\widehat E_m)\leq C'.\] Moreover, if $\hat \lambda_i$ are such that \eqref{rettangolo} holds for $\widehat E_m$, then from \eqref{diampiccoli} we get that, for all $i<n$, 
\[\hat\lambda_i \leq Cm^{-\frac{n+1-\alpha}{\alpha(n-1)+1}}.\]

Letting $m\to +\infty$, and eventually extracting a subsequence, we conclude that $\hat \lambda_i\to 0$ for every $i<n$, whereas $\diam(\widehat E_m)\to \widehat L$, 
for some $\widehat L\in [C, C']$, 
which gives the thesis. 
 \end{proof} 
 

\end{document}